\newtheorem{theorem}{Theorem}%
\newtheorem{proposition}{Proposition}%
\newtheorem{lemma}{Lemma}
\newtheorem*{lemma star}{Lemma}
\renewcommand{\overline}{\bar}
\begin{document}
\title[Article Title]{\bf Convergence of the Immersed Interface Method in Linear Elasticity}

\author*[1,2]{\fnm{Sabia} \sur{Asghar}}\email{sabia.asghar[AT]uhasselt.be}

\author*[3,4]{\fnm{Qiyao} \sur{Peng}}\email{qiyao.peng[AT]lancaster.ac.uk}

\author[5]{\fnm{Etelvina} \sur{Javierre}}\email{etelvina[AT]unizar.es}
\equalcont{These authors contributed equally to this work.}

\author[1,2,6,7]{\fnm{Fred} \sur{Vermolen}}\email{fred.vermolen[AT]uhasselt.be}
\equalcont{These authors contributed equally to this work.}

\affil[1]{\orgdiv{Department of Mathematics and Statistics, Computational Mathematics Group}, \orgname{University of Hasselt}, \orgaddress{\street{Diepenbeek}, \city{Hasselt}, \postcode{3590}, \country{Belgium}}}

\affil[2]{\orgname{Data Science Institute (DSI), University of Hasselt}, \orgaddress{\street{Diepenbeek}, \city{Hasselt}, \postcode{3590}, \country{Belgium}}}

\affil[3]{\orgdiv{Mathematics for AI in Real-world Systems, School of Mathematical Sciences}, \orgname{Lancaster University}, \orgaddress{ \city{Lancaster}, \postcode{LA1 4YF}, \country{UK}}}

\affil[4]{\orgdiv{Mathematical Institute}, \orgname{ Leiden University}, \orgaddress{ \city{Leiden}, \postcode{Einsteinweg 55 2333 CC}, \country{The Netherlands}}}

\affil[5]{\orgdiv{IUMA \& Applied Mathematics Department}, \orgname{University of Zaragoza}, \orgaddress{\city{Zaragoza}, \postcode{50009}, \country{Spain}}}

\affil[6]{\orgdiv{Department of Mathematics and Applied Mathematics}, \orgname{University of Johannesburg}, \orgaddress{\city{Johannesburg}, \postcode{2006}, \country{South Africa}}}

\affil[7]{\orgdiv{Delft Institute of Applied Mathematics}, \orgname{Delft University of Technology}, \orgaddress{\street{Mekelweg 4}, \city{Delft}, \postcode{2628 CD Delft}, \country{The Netherlands}}}

\abstract{We consider an open, bounded, simply connected (Lipschitz) domain in $\mathbb{R}^d$, which contains a closed polyhedral surface or polygonal contour, referred to as the interface. From this interface, forces are exerted in the normal direction. The forces are continuously distributed over the interface, resulting in an integral expression. This features an important characteristic of the immersed interface method. Since the integral cannot be resolved exactly, one relies on numerical quadrature rules to approximate the integral. Therefore, we consider two different linear elasticity problems with forces over a curve or surface (interface) that is located within the (open) domain of computation:
(1) The force is defined by an integral over the interface; (2) The force is defined by a quadrature approximation of the integral over the interface. We prove that the L2–norm of the difference between the solutions from the two elasticity problems is of the same order as the error of quadrature. The results are demonstrated for both bounded and unbounded domains. The proof that we establish relies on the use of: (i) fundamental solutions for linear elasticity, exhibiting singular behaviors (in particular around points of action) and not being in ${\bf H}^1$, and (ii) on the use of singularity removal principle and the Extended Trace Theorem. Convergence is demonstrated in the ${\bf L}^2$--norm on curves and manifolds. We show some numerical experiments on the basis of fundamental solutions with a Midpoint quadrature rule in an unbounded and a bounded domain.  We note that the error that we estimate is for the exact solutions and not for finite element solutions. Hence in the numerical finite element-based simulations, the numerical results contain an additional error due to the finite element approach.}

\keywords{linear elasticity, point forces, Dirac delta distribution, fundamental solutions, singularity removal technique, convergence}

\maketitle

\section{Introduction}
\label{sec: Introduction}
In many biomedical processes such as wound healing, tumor growth, organ development or metastasis of cancer, cell migration is a fundamental process. Whenever a cell needs to migrate, it adheres to the substrate and applies pulling forces to 'crawl' optimally \citep{DePascalis2017, Wang2023, Zheng2019}. These pulling forces are then transmitted to the extracellular matrix (ECM) and remodel the ECM \cite{Totsukawa2004, Schwartz2010, Lecuit2011}. The way cellular forces impact the ECM, often causes significant biological implications: as an example, we mention dermal contractures that are caused by excessive forces exerted by fibroblasts and myofibroblasts \citep{Bin_2011} after a severe burn injury or other deep tissue injury; in the context of cancer, cellular forces facilitate and promote the migration and invasion of cancer cells to other body parts through remodeled ECM \citep{Daphne_2013, Daphne_2021, DePascalis2017}. 

There are various mathematical frameworks to model how the cellular pulling forces deform the ECM \citep{Zheng2019, Peng_2022, Peng_2022_JCAM,ArellanoTint2025}. 
A physically accurate way to model the interaction of cells and their surroundings (tissue) is to define the intra- and extracellular regions as separate subdomains and to implement the forces by a inhomogeneous Neumann boundary condition on the cell membrane \citep{Peng_2022_JCAM}. However, when cells migrate, the mesh structure needs to be regenerated at every time step, increasing the computational workload, which is not ideal particularly for a large number of cells. Therefore, a more computationally efficient model is to utilize the Immersed Interface Approach \citep{Yuri_2025, roy2020immersed} (also known as Immersed Boundary Method), where the forces are implemented by (migrating) point forces in the form of a Dirac delta distribution at the cell membrane. This approach also has the flexibility of handling more complicated and migrating geometries \citep{Peng2020-lu, Peng_Cancer_2023,alvarez2024}.

Linear elasticity is one of the fundamental models for describing how the material deforms in response to the forces applied to it.
Of course, we are aware of the crude and inaccurate nature of the approximation that Hooke's Law for linear elasticity provides for tissue mechanics. Large deformations from cellular and external forces typically require more accurate hyperelastic, viscoelastic, or morphoelastic models.  
However, this paper \textit{does not} aim to develop the most precise method for computing displacement fields. Instead, we focus on demonstrating and analyzing the accuracy of the Immersed Interface Method in its simplest mechanical context (i.e. linear elasticity model without inertia) before investigating more complicated models. The advantage is that one can utilize the superposition principle and the fundamental solution.

In \citet{Peng_2022_JCAM}, the authors have shown the transition between the so-called 'hole' approach (the cellular force modeled as a smooth function in the inhomogeneous Neumann boundary condition over the cell membrane) and the 'immersed interface approach' (the cellular force modeled as an integral of the Dirac delta distributions over the cell membrane), for the case that the displacement field was obtained by the use of the finite element method (FEM). While the discretization error from the FEM applied to the boundary value problem (BVP) is well-known, the numerical implementation of the immersed interface method introduces an additional error as a result of the quadrature method that is needed to approximate the integral expression over the interface for the forcing.

In this study, our aim is to investigate how this quadrature error will influence the accuracy of the solutions to the immersed boundary approach, where the cellular forces are expressed as Dirac delta distributions, in the context of linear elasticity. Theoretically, the forces over the interface are in the form of an integral, whereas in the context of numerical simulation, this integral becomes a summation of terms that results from the choice of the quadrature rule. Unlike the approach used in \citet{Peng_2022_JCAM}, the analysis conducted in the current study utilizes the fundamental solution of linear elasticity in the fields $\mathbb{R}^2$ or $\mathbb{R}^3$, and is valid for the solution to the continuous problem. 

This study shares some similarities with the Boundary Element Method (BEM) \citep{Brebbia1992, Kythe1995, Alarcon1972} in its reliance on fundamental solutions (Green's functions) for linear elasticity problems. However, there are several important distinctions: unlike BEM which uses a variational formulation with boundary integrals (Somigliana's identity \citep{Friedel1981}) to enforce boundary conditions and requires solving a system of linear equations with a dense coefficient matrix, which makes it tedious from an algebraic point of view. Our approach does not require (iterative) adjustment of solutions such that boundary conditions are satisfied. Hence our formulation paves the road for efficient computation.

The novel aspects of our work include the application of forces on an internal closed contour or surface within a bounded domain, the use of fundamental solutions for steady-state linear elasticity with infinitesimal strain, and the use of the extended trace theorem to establish convergence rates in regions away from where forces are applied. As mentioned earlier, these features allow our method to bypass some of the computational complexities inherent in traditional BEM approaches \citep{Kythe1995, Alarcon1972} while maintaining rigorous convergence properties, as demonstrated through our analytical results.

The manuscript is structured as follows. The mathematical formulation is presented in Section \ref{sec:2}. In Section \ref{sec:3}, we show the convergence between the solutions to the linear elasticity model, which use the integral and summation forms for the cellular forces in any dimensionality larger than one. Section \ref{sec:4} presents the numerical results and finally we deliver the conclusions in Section \ref{sec:5}.

\section{The Model Equations and the Fundamental Solutions}
\label{sec:2}

Starting from the immersed interface approach in \citet{Peng_2022_JCAM}, we consider the linear elasticity equation with cellular forces, expressed as a collection of Dirac delta distributions, both in the form of an integral and a summation. By virtue of the availability of Green's function, in this section, we treat two- and three-dimensional cases.

\subsection{The Boundary Value Problems}
We consider a non-empty, open, bounded, simply connected Lipschitz domain $\Omega \in \mathbb{R}^d$, bounded by $\partial\Omega$ (hence, $\overline{\Omega} = \Omega \cup \partial \Omega$) for $d \in \{2,3\}$. In $\Omega$, there is a non-degenerate closed curve or surface (begin and end points coincide) that is denoted by $\Gamma$, of which the non-empty enclosed region is regarded as a cell (or tumor) in the context of biological application. The forces are applied on $\Gamma$ in the normal direction towards or away from the cell center. We note that the direction of the cellular forces can be reverted or generalized easily. For the purpose of evaluating the solutions (which will be elaborated in Section \ref{sec:3}), we define a larger circle (or sphere in three dimensions) that encloses $\Gamma$. This circle (or sphere) is denoted by $S$ and we evaluate the displacement over $S$. Figure \ref{fig1} presents a two-dimensional schematic with a circular cell, circular $S$ and square $\Omega$. The sketch can also represent a side view for the three-dimensional case, with a spherical cell, spherical $S$ and a cubic domain of computation $\Omega$.

\begin{figure}
    \centering
    \includegraphics[width=0.85\linewidth]{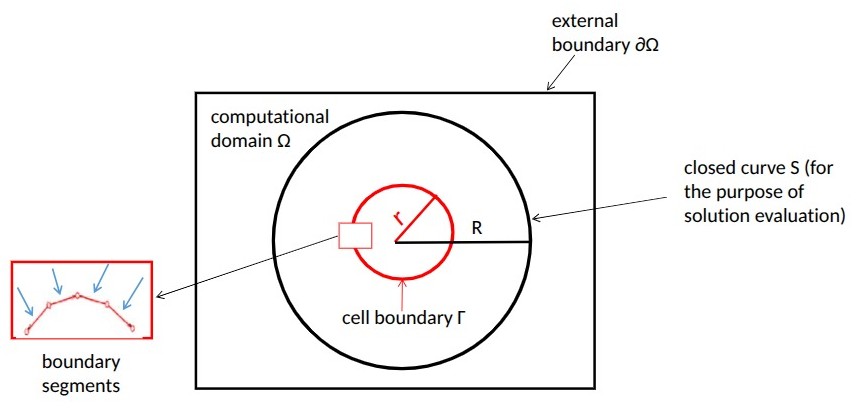}
    \caption{\it A schematic of two-dimensional case is shown, in which $\Omega\subset\mathbb{R}^2$ is an open bounded square domain with boundary $\partial\Omega$; $\Gamma$ represents the boundary of a (circular) cell (or tumor), which is split into finite number of line segments and point forces are applied at the midpoint of these line segments; $S$ is a circular curve strictly embedded in $\Omega$ and strictly enclosing $\Gamma$. We evaluate the ${\bf L}^2$--norm of the solution over $S$. Note that this schematic can also be regarded as a side view of the three-dimensional setting with cube and spheres, respectively.}
    \label{fig1}
\end{figure}

Let ${\boldsymbol x} \in \Omega \subset \mathbb{R}^d$ denote the spatial coordinate, where $\Omega$ is the computational domain. The displacement field is denoted by ${\boldsymbol u}:{\boldsymbol x} \longrightarrow \boldsymbol u(\boldsymbol x)$, ${\boldsymbol u} :\Omega \longrightarrow \mathbb{R}^d$, representing the displacement of a material from its reference configuration. In the current study, inertial effects are neglected and linear elasticity is considered, which gives 
 $$-\nabla \cdot \boldsymbol\sigma(\boldsymbol{u}) = \boldsymbol{f},$$ with $\boldsymbol{f}$ representing the body force. Here, the stress-strain relation is defined by Hooke's law:
\begin{equation*}
    \left\{
        \begin{aligned}
            \boldsymbol\sigma &= \frac{E}{1+\nu }\left\{ \epsilon +  \frac{\nu }{1-2\nu } tr(\epsilon )\mathbf{I}\right\},\\
            \boldsymbol\epsilon &= \frac{1}{2}\left[\nabla\boldsymbol{u}+ \nabla\boldsymbol{u}^T\right],
        \end{aligned}
    \right.
\end{equation*}
where $\boldsymbol \sigma$ and $\boldsymbol \epsilon$, represent the stress tensor and infinitesimal strain tensor, respectively; $E$ denotes the Young's Modulus of the material (cell and immediate surroundings), $\nu $ is the Poisson's ratio.

For the computational domain $\Omega$ and together with the homogeneous Dirichlet boundary condition, the two BVPs with the cellular forces in the integral and summation forms are given by
\begin{equation*}
    \mathrm{(BVP_{int})}\left\{
    \begin{aligned}
        - \nabla \cdot {\boldsymbol \sigma}({\boldsymbol u}) &= \int_{\Gamma}Q(\boldsymbol{x}^{\prime })\boldsymbol{n}(\boldsymbol{x}^{\prime })\delta (\boldsymbol{x}-\boldsymbol{x}^{\prime })d\Gamma (\boldsymbol{x}^{\prime}), &\mbox{$\boldsymbol{x}\in \Omega$},\\
        \boldsymbol{u}(\boldsymbol{x})&=\boldsymbol{0}, &\mbox{$\boldsymbol{x}\in\partial\Omega$,}
    \end{aligned}
    \right.
\end{equation*}
and 
\begin{equation*}
    \mathrm{(BVP_{sum})}\left\{
    \begin{aligned}
        - \nabla \cdot {\boldsymbol\sigma}({\boldsymbol u}_h) &= \sum\limits_{j=1}^{m} w_jQ(\boldsymbol{x}_{j})\boldsymbol{n}(\boldsymbol{x} _{j})\delta (\boldsymbol{x}-\boldsymbol{x}_{j}), &\mbox{$\boldsymbol{x}\in \Omega$},\\
        \boldsymbol{u_h}(\boldsymbol{x})&=\boldsymbol{0}, &\mbox{$\boldsymbol{x}\in\partial\Omega$,}
    \end{aligned}
    \right.
\end{equation*}
where $Q(\boldsymbol{x})$ is the force magnitude, $\boldsymbol{n}(\boldsymbol{x})$ is the unit norm vector on $\Gamma$ towards the cell center, $\delta(\boldsymbol{x})$ is the Dirac delta distribution, $m$ is the number of discretized segments of $\Gamma$ (as shown in Figure \ref{fig1}) for the application of the quadrature rule to the numerical simulation and $w_j$ is the weight resulting from the selected quadrature rule.

\subsection{Fundamental Solutions and Convolution-Based Solutions}
\noindent
For a single point force in an infinite domain in the field $\mathbb{R}^d$ for $d = 2,3$ with force $\boldsymbol{F}$, the linear elasticity equation reads as
\begin{equation*}
-\nabla \cdot \boldsymbol\sigma({\boldsymbol u}) ={\boldsymbol {F}}~\delta ({\boldsymbol{x}}-{\boldsymbol{x}}%
^{\prime }), \qquad {\boldsymbol x} \in \mathbb{R}^d.   \label{8}
\end{equation*}%
The solution to this equation in $\mathbb{R}^d$ is given by
\begin{align*}
\label{Eq:fundamental_sol}
{\boldsymbol u}({\boldsymbol x}) = {\boldsymbol G}({\boldsymbol x},{\boldsymbol x'}) {\boldsymbol F}, \qquad {\boldsymbol x} \in \mathbb{R}^d,
\end{align*}
where $\boldsymbol{G}$ denotes the Green's tensor given by 
\cite{Weinberger_2005}
\begin{equation}
\boldsymbol{G}(\boldsymbol{x},\boldsymbol{x}^{\prime })=\left\{ 
\begin{array}{ll}
\displaystyle{\ \frac{1}{8\pi \mu (1-\nu )}\left( -(3-4\nu )\log (||{%
\boldsymbol{x}}-{\boldsymbol{x}}^{\prime }||){\boldsymbol{I_2}}+\frac{({%
\boldsymbol{x}}-{\boldsymbol{x}}^{\prime })\otimes ({\boldsymbol{x}}-{%
\boldsymbol{x}}^{\prime })}{||{\boldsymbol{x}}-{\boldsymbol{x}}^{\prime
}||^{2}}\right) }, & \text{in }\mathbb{R}^{2} \\ 
&  \\ 
\displaystyle{\frac{1+\nu }{16\pi E(1-\nu )}\left[ \frac{(3-4\nu )%
\boldsymbol{I}_{3}}{||\boldsymbol{x}-\boldsymbol{x}^{\prime }||}+\frac{(%
\boldsymbol{x}-\boldsymbol{x}^{\prime })\otimes (\boldsymbol{x}-\boldsymbol{x%
}^{\prime })}{||\boldsymbol{x}-\boldsymbol{x}^{\prime }||^{3}}\right] }, & 
\text{in }\mathbb{R}^{3}%
\end{array}%
\right.  \label{10}
\end{equation}%
with the Lam\'{e} coefficients $\mu $ and $\lambda $ are related to the Young's Modulus and Poisson ratio through the following equations: 
$$\mu =\frac{E}{2(1+\nu )}\text{ and }\lambda =\frac{E\nu }{(1+\nu )(1-2\nu )}.$$

In $\mathbb{R}^d$, we consider the linear elasticity model with the forces in $\mathrm{(BVP_{int})}$ and $\mathrm{(BVP_{sum})}$, respectively:
\begin{equation}
-\nabla \cdot \boldsymbol{\sigma }(\boldsymbol{u}^{\ast })=\int_{\Gamma
}Q(\boldsymbol{x}^{\prime })\boldsymbol{n}(\boldsymbol{x}^{\prime })\delta (%
\boldsymbol{x}-\boldsymbol{x}^{\prime })d\Gamma (\boldsymbol{x}^{\prime
}), \quad \boldsymbol{x}\in\mathbb{R}^d,  \label{eq:linear_elas_u_star}
\end{equation}
and 
\begin{equation}
-\nabla \cdot \boldsymbol{\sigma }(\boldsymbol{u}_{h}^{\ast
})=\sum\limits_{j=1}^{m} w_jQ(\boldsymbol{x}_{j})\boldsymbol{n}(\boldsymbol{x} _{j})\delta (\boldsymbol{x}-\boldsymbol{x}_{j}), \quad \boldsymbol{x}\in\mathbb{R}^d.  \label{eq:linear_elas_u_h_star}
\end{equation}

Solving the above equations with applying the Superposition Principle, we obtain the following theoretical 'field results':
\begin{equation}
\boldsymbol{u}^{\ast }(\boldsymbol{x})=\int_{\Gamma }Q(\boldsymbol{x}%
^{\prime })\boldsymbol{G}(\boldsymbol{x},\boldsymbol{x}^{\prime })%
\boldsymbol{n}(\boldsymbol{x}^{\prime })d\Gamma (\boldsymbol{x}^{\prime
}), \qquad {\boldsymbol x} \in \mathbb{R}^d,  \label{eq:u_star}
\end{equation}
and 
\begin{equation}
{\boldsymbol{u}}_{h}^{\ast }({\boldsymbol{x}})=\sum\limits_{j=1}^{m} w_j Q(%
\boldsymbol{x}_{j})\boldsymbol{G}(\boldsymbol{x},\boldsymbol{x}_{j})%
\boldsymbol{n}(\boldsymbol{x}_{j}), \qquad {\boldsymbol x} \in \mathbb{R}^d.  \label{eq:u_h_star}
\end{equation}%
The Green's function does not lie in ${\bf H}^1(\mathbb{R}^d)$, hence, the solutions defined above are not in ${\bf H}^1(\mathbb{R}^d)$ either for $d = 2,3$. We note that for any subset of $\mathbb{R}^d$ excluding the location of the point force, the solution is in ${\bf H}^1$.

\section{Convergence of the BVPs with Different Forms of Forces}
\label{sec:3}
In this section, we investigate the convergence between the solutions to $\mathrm{(BVP_{int})}$ and $\mathrm{(BVP_{sum})}$. First, we introduce some notation for Hilbert spaces of vector-fields. The space ${\bf L}^2(\Omega)$ for vector-fields is defined by
\[
{\bf L}^2(\Omega) := \left\{ {\boldsymbol v}: \Omega \longrightarrow \mathbb{R}^d: \int_{\Omega} ||{\boldsymbol v}||^2 d \Omega < \infty\right\}, 
\]
with its associated norm
\[
||{\boldsymbol v}||^2_{{\bf L}^2(\Omega)} = \int_{\Omega} || {\boldsymbol v} ||^2 d \Omega.
\]
In some instances, we also need the ${\bf L}^2$-norm of the vector-field over the (outer) boundary, given by
\[
|| {\boldsymbol v} ||_{{\bf L}^2(\partial \Omega)} =
\int_{\partial \Omega} || {\boldsymbol v} ||^2 d \Omega.
\]
Since we also deal with tensors, we introduce the ${\bf L}^2$-norm of a tensor function $\boldsymbol V$, ${\boldsymbol V}:\Omega \longrightarrow \mathbb{R}^d \times \mathbb{R}^d$ by
\[ 
|| {\boldsymbol V} ||^2_{{\bf L}^2(\Omega)} := 
\int_{\Omega} {\boldsymbol V} : {\boldsymbol V} d \Omega,
\]
where $:$ stands for the matrix (tensor) scalar product.
This allows the introduction of 
\[
{\bf H}^1(\Omega) := \left\{ {\boldsymbol v} \in {\bf L}^2(\Omega):  \|\nabla {\boldsymbol v}\|_{{{\bf L}^2}(\Omega)} < \infty \right\}.
\]
Note that $\nabla {\boldsymbol v}$ is a tensor.
Taking the closure of ${\bf C}^1_0(\Omega)$ (compact support) in ${\bf H}^1(\Omega)$, we get
\[
{\bf H}_0^1(\Omega) := \left\{ {\boldsymbol v} \in {\bf H}^1(\Omega) : {\boldsymbol v}|_{\partial \Omega} = {\bf 0}  \right\}.
\]
Inspired by the singularity removal technique introduced in \citet{Gjerde_2019}, we express $\boldsymbol{u}$ and $\boldsymbol{u}_h$ as 

\begin{equation}
\label{eq:u_splitting}
    {\boldsymbol u} = {\boldsymbol u}^* + {\boldsymbol v} \text{ and }
{\boldsymbol u}_h = {\boldsymbol u}_h^* + {\boldsymbol v}_h,
\end{equation}
where $\boldsymbol{u}^*(\boldsymbol x)$ and $\boldsymbol{u}_h^*(\boldsymbol x)$ are defined in Equations \eqref{eq:u_star} and \eqref{eq:u_h_star}, respectively; $\boldsymbol{v}(\boldsymbol x)$ and $\boldsymbol{v}_h(\boldsymbol x)$ are the auxiliary solutions. Substituting Equation \eqref{eq:u_splitting} into $\mathrm{(BVP_{int})}$ and $\mathrm{(BVP_{sum})}$, together with Equations \eqref{eq:linear_elas_u_star} and \eqref{eq:linear_elas_u_h_star}, $\boldsymbol{v}$ and $\boldsymbol{v}_h$ satisfy the BVPs below, respectively:
\begin{equation*}
    \mathrm{(BVP_{\boldsymbol v})}\left\{
        \begin{aligned}
            -\nabla\cdot\boldsymbol\sigma(\boldsymbol{v}) &= \boldsymbol{0}, &\mbox{$\boldsymbol{x}\in\Omega\subset\mathbb{R}^d$,}\\
            \boldsymbol{v}(\boldsymbol{x}) &= -\boldsymbol{u}^*(\boldsymbol{x}), &\mbox{$\boldsymbol{x}\in\partial\Omega$},
        \end{aligned}
    \right.
\end{equation*}
and
\begin{equation*}
    \mathrm{(BVP_{{\boldsymbol v}_h})}\left\{
        \begin{aligned}
            -\nabla\cdot\boldsymbol\sigma(\boldsymbol{v}_h) &= \boldsymbol{0}, &\mbox{$\boldsymbol{x}\in\Omega\subset\mathbb{R}^d$,}\\
            \boldsymbol{v}_h(\boldsymbol{x}) &= -\boldsymbol{u}_h^*(\boldsymbol{x}), &\mbox{$\boldsymbol{x}\in\partial\Omega$}.
        \end{aligned}
    \right.
\end{equation*}
As neither $\boldsymbol{u}^*$ nor $\boldsymbol{u}^*_h$ lie in ${\bf H}^1(\Omega)$, solutions $\boldsymbol{u}$ and $\boldsymbol{u}_h$ to $\mathrm{(BVP_{int})}$ and $\mathrm{(BVP_{sum})}$, respectively, are not in ${\bf H}^1(\Omega)$ either. However, since $\boldsymbol{G}$ and, therefore $\boldsymbol{u}^*$ and $\boldsymbol{u}_h^*$ are smooth away from $\Gamma$, we have $\boldsymbol{u}^*, \boldsymbol{u}_h^* \in {\bf L}^2(\partial \Omega)$. Note that $\partial \Omega$ is away from $\Gamma$. Of course, away from the point where the Dirac measure acts, the solution is in ${\bf H}^1$.
This is characteristic in general for equations like the Poisson or heat equation, which has been investigated, for instance, in \citep{Peng_2022_JCAM, HMEvers2015, Yang2025, Boon_2023}.  However, the solution lives in ${\bf L}^2(\Omega)$. Bearing in mind that
$${\bf C}^{\infty}_0(\Omega) \subset {\bf H}^1_0(\Omega) \subset {\bf L}^2(\Omega),$$
and since ${\bf C}^{\infty}_0(\Omega)$ is dense in ${\bf L}^2(\Omega)$, it follows that ${\bf H}^1_0(\Omega)$ is dense in ${\bf L}^2(\Omega)$, and therefore the solution can be approximated arbitrarily well by spans of functions in ${\bf H}^1_0(\Omega)$, provided that we take a sufficient number of these functions. However, the convergence rate is jeopardized by the solution not being in ${\bf H}^1_0(\Omega)$. For this reason, the solution is sought
in weighted finite element (Sobolev) spaces \citep{Boon_2023} or in different Sobolev spaces, such as ${\bf W}^{1,1}$ \citep{D_Angelo_2012}, or one assesses the convergence of
the FEM solution away from the point of action of the Dirac delta
distribution \citep{K_ppl_2014}. Boon and Vermolen \cite{Boon_2023} assessed well-posedness in weighted Sobolev spaces in $\mathbb{R}^{2}$ for the equation of elasticity with a Dirac point force. K\"oppl \citep{K_ppl_2024} applied a Zenger correction to assess an elliptic partial differential equation with a Dirac term. 

The convergence of the solutions to $\mathrm{(BVP_{int})}$ and $\mathrm{(BVP_{sum})}$ is verified by analyzing the convergence between $\boldsymbol{v}^*$ and $\boldsymbol{v}_h^*$, as well as $\boldsymbol{u}^*$ and $\boldsymbol{u}_h^*$. For the concision of the manuscript, we denote $\boldsymbol{u}_p = \{\boldsymbol{u},\boldsymbol{u}_h\}$, $\boldsymbol{u}^*_p = \{\boldsymbol{u}^*,\boldsymbol{u}^*_h\}$, and $\boldsymbol{v}_p = \{\boldsymbol{v},\boldsymbol{v}_h\}$, where the asterisk symbolizes the field solution that are obtained by a convolution of the fundamental solution; the subscript $h$ symbolizes the approximation of the solution by the use of a quadrature rule to approximate the integral over $\Gamma$.

We start with the existence and uniqueness of the auxiliary solution $\boldsymbol{v}_p$ $\in {\bf{H}}^1(\Omega)$, then we verify the convergence between $\boldsymbol{v}$ and $\boldsymbol{v}_h$. Consider the weak form of $\mathrm{(BVP_{\boldsymbol v})}$ and $\mathrm{(BVP_{{\boldsymbol v}_h})}$:
\begin{equation*}
    \mathrm{(WF_{{\boldsymbol v}_p})}\left\{
\begin{aligned}
    &\text{Find ${\boldsymbol v}_p \in {\bf H}^1(\Omega)$ subject to ${\boldsymbol v}_p|_{\partial \Omega} = -{\boldsymbol u}_p^*$ such that, for all ${\bf \phi} \in {\bf H}_0^1(\Omega)$,}
\\ &\text{ we have }
    a({\boldsymbol v}_p,{\bf \phi)} = 0,
\text{ where } 
a({\boldsymbol v}_{p},~\phi ) = {\int_{\Omega}\boldsymbol{\sigma}{(\boldsymbol {v}_{p})}}:\boldsymbol {\epsilon} (\phi )~d\Omega.
\end{aligned}
    \right.
\end{equation*}
Then the existence and uniqueness of $\boldsymbol{v}_p$ are warranted by Lemma \ref{lemma: uniqueness of v_p} (see Appendix \ref{Appendix} for its proof):
\begin{lemma}\label{lemma: uniqueness of v_p}
    The variational problem $\mathrm{(WF_{{\boldsymbol v}_p})}$ has one and only one solution ${\boldsymbol v}_p \in {\bf H}^1(\Omega)$.
    \label{existence}
\end{lemma}

In cases where the bilinear form satisfies $a(.,.):{\bf H}_1 \times {\bf H}_2 \longrightarrow \mathbb{R}$, where ${\bf H}_1$ and ${\bf H}_2$ are two different Hilbert spaces such as in the case of the use of weighted Sobolev spaces, one uses Necas' Theorem \citep{Brenner_Scott} as a generalization to establish existence and uniqueness. The coercivity condition is replaced with the inf-sup condition and for all ${\boldsymbol u} \in {\bf H}_1$ there must be a ${\boldsymbol v} \in {\bf H}_2$ such that $a(\boldsymbol u,\boldsymbol v) \ne 0$.

As $\boldsymbol{u}_p^*$ is involved as boundary condition for $\mathrm{(BVP_{{\boldsymbol v}_p})}$, the convergence of $\boldsymbol{v}_p$ requires the convergence of $\boldsymbol{u}_p^*$, which solves Equations \eqref{eq:linear_elas_u_star} and \eqref{eq:linear_elas_u_h_star} with integral and summation form of forces, respectively.
We first quantify the difference between the forces, which amounts to a multidimensional generalization of the standard composite midpoint rule, over a polygon (2D) or a closed surface (3D). The numerical validation will be done for this quadrature rule.

\begin{lemma}\label{lemma:midpoint}
    Let $\Gamma_p$ be a polygon, a closed curve consisting of line segments $\Tilde{\Gamma}_q$, embedded in $\Omega \subset \mathbb{R}^2$ that is composed of 
a union of line segments $\Delta \Gamma_j$, that is ${\Gamma_p = \bigcup_{j=1}^m\Delta \Gamma_j}$, such that each $\Tilde{\Gamma}_q$ consists of an integer number of line segments $\Delta \Gamma_p$. Further, let line segment $\Delta \Gamma_j$ be bounded by vertices $\mathbf{x}_j$ and $\mathbf{x}_{j+1}$, such that 
$\mathbf{x}_{m+1} = \mathbf{x}_{1}$, and let $\mathbf{x}_{j+\frac12}$ be the midpoint of a line segment $\Delta \Gamma_j$, and let $f$ be a $C^2$--smooth function that is defined over an open set 
that includes $\Gamma_p$. Then there exists a $C_1>0$, such that
$$
\bigg| \int_{\Gamma_p} f d \Gamma - \sum_{j = 1}^m f(\mathbf{x}_{j+\frac12}) || \mathbf{x}_{j+1} - \mathbf{x}_j ||  \bigg|\le C_1 h^2.
$$
where $h=\text{max}_{j\in{\{1,...,m}\}}\parallel\mathbf{x}_{j+1} - \mathbf{x}_j\parallel$.

In three dimensions, let $\Gamma_p$ be a closed surface in $\Omega \subset \mathbb{R}^3$, that is composed of a union of surface segments $\Delta \Tilde{\Gamma}_q$. Let $\Gamma_p$ be divided by triangular elements $\Delta \Gamma_j$, such that ${\Gamma_p = \bigcup_{j=1}^m\Delta \Gamma_j}$, let $\mathbf{x}_j$ be the centre of $\Delta \Gamma_j$, then there exists a positive constant $C_2$, such that for each component we have
\[
\left| \int_{\Gamma_p} f({\bf x}) \, d{\bf x} - \sum_{j=1}^{m} \frac{|A_j|}{2} f({\bf x}_j) \right| \leq {C_2}h{^2},
\]
where $h$ is the maximal diameter among all the surface elements over $\Gamma_p$ and $| A_j |/2$ is the area of the (triangular) surface element $\Delta \Gamma_j$.
\end{lemma}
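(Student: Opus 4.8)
The plan is to reduce the statement to the classical one- and two-dimensional quadrature error estimates applied element-by-element. Since the integral is taken over the polygon (resp.\ triangulated surface) $\Gamma_p$ itself rather than over a smooth curve (resp.\ surface), there is no geometric approximation error to control: the integration domain and the quadrature nodes live on the same object. Hence I would first use additivity of the integral to write $\int_{\Gamma_p} f\, d\Gamma = \sum_{j=1}^m \int_{\Delta\Gamma_j} f\, d\Gamma$, so that the global error equals the sum of the local quadrature errors over the elements $\Delta\Gamma_j$; it then suffices to estimate each local error and sum with the right bookkeeping to extract the factor $h^2$.

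For the two-dimensional (curve) case, I would parametrise each segment $\Delta\Gamma_j$ by arc length $s\in[-\tfrac{\ell_j}{2},\tfrac{\ell_j}{2}]$ centred at the midpoint ${\bf x}_{j+\frac12}$, where $\ell_j = \|{\bf x}_{j+1}-{\bf x}_j\|$, so that $g_j(s) := f({\bf x}_{j+\frac12} + s\,{\bf t}_j)$ is a $C^2$ function of one variable ($\,{\bf t}_j$ the unit tangent). The standard midpoint estimate, obtained from a second-order Taylor expansion about $s=0$ together with the vanishing of the odd moment $\int_{-\ell_j/2}^{\ell_j/2} s\,ds = 0$, bounds the local error by $\frac{\ell_j^3}{24}\sup_s|g_j''(s)|$. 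By the chain rule $g_j''(s) = {\bf t}_j^{T}\,\nabla^2 f\,{\bf t}_j$, so $\sup_s|g_j''|\le M$, where $M := \sup\|\nabla^2 f\|$ over a fixed compact neighbourhood of $\Gamma$. Summing over $j$ and using $\ell_j^3 \le h^2\,\ell_j$ gives a total error at most $\frac{M h^2}{24}\sum_{j=1}^m \ell_j = \frac{M h^2}{24}\,|\Gamma_p|$, which is the claim with $C_1 = \frac{M\,|\Gamma_p|}{24}$.

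For the three-dimensional (surface) case the structure is identical, with the one-point centroid rule playing the role of the midpoint rule. On each triangle $\Delta\Gamma_j$ with centroid ${\bf x}_j$ and area $A_j=|\Delta_j|/2$, I would Taylor-expand $f$ about ${\bf x}_j$ as $f({\bf x}) = f({\bf x}_j) + \nabla f({\bf x}_j)\cdot({\bf x}-{\bf x}_j) + R({\bf x})$ with $|R({\bf x})|\le \frac{M}{2}\|{\bf x}-{\bf x}_j\|^2$. The decisive point is that the centroid rule is exact on affine functions: because ${\bf x}_j$ is the centre of mass of the triangle, $\int_{\Delta\Gamma_j}({\bf x}-{\bf x}_j)\,dA = {\bf 0}$, so the constant term integrates to $A_j f({\bf x}_j)$ and the linear term drops out. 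The local error is therefore $|\int_{\Delta\Gamma_j} R\, dA| \le \frac{M}{2}\int_{\Delta\Gamma_j}\|{\bf x}-{\bf x}_j\|^2\, dA \le \frac{M}{2}\,h^2 A_j$, since every point of the triangle lies within the diameter $h$ of ${\bf x}_j$. Summing yields a total error at most $\frac{M h^2}{2}\sum_j A_j = \frac{M h^2}{2}\,|\Gamma_p|$, i.e.\ $C_2 = \frac{M\,|\Gamma_p|}{2}$.

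The step I expect to be the crux is the exactness of each rule on affine functions, which is precisely what upgrades the order from $O(h)$ to $O(h^2)$: in one dimension it follows from the symmetry of the midpoint (the odd moment vanishes), and in two dimensions from the centroid being the centre of mass. Beyond this, the only genuine care needed is uniformity of the constants in $m$: one must fix a single compact neighbourhood of $\Gamma$ containing every $\Gamma_p$ for all small $h$, so that $M=\sup\|\nabla^2 f\|$ is finite and $h$-independent, and observe that $|\Gamma_p| = \sum_j\ell_j$ (resp.\ $\sum_j A_j$) stays uniformly bounded because it converges to the length (resp.\ area) of $\Gamma$ as $m\to\infty$. Together these make $C_1$ and $C_2$ independent of $m$, completing the argument.
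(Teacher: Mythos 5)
Your proof is correct and follows essentially the same route as the paper's: an element-by-element Taylor expansion about the midpoint (resp.\ centroid), with the first-order term killed by the vanishing first moment (symmetry of the midpoint in 1D, centre-of-mass property of the centroid in 2D), the second-order term bounded via the Hessian, and the local bounds $\ell_j^3 \le h^2 \ell_j$ (resp.\ $h^2 A_j$) summed to yield a constant times $h^2 |\Gamma_p|$. The only cosmetic differences are that you parametrise by arc length and integrate directly on the physical triangle, whereas the paper uses the parametrisation $s \in [-1,1]$ and a reference-triangle Jacobian; your closing remark on keeping the constants uniform in $m$ via a fixed compact neighbourhood of $\Gamma$ is a point the paper leaves implicit.
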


\begin{proof}
First we treat the 2D case of a closed curve. Using a midpoint rule for a multivariate function, let us consider
\begin{equation*}
  \mathbf{x}(s)=\mathbf{x}_{j+\frac12}+\frac{s}{2}{(\mathbf{x}_{j+1}-\mathbf{x}_j)}, \quad -1 \leq s \leq 1,
\end{equation*}
Hence, $\mathbf{x}(0)=\mathbf{x}_{j+1/2}\quad\text{and}\quad{\mathbf{x}^{'}(s)}=\frac{1}{2}(\mathbf{x}_{j+1}-\mathbf{x}_j)$, and therefore,
\begin{equation*}
\parallel {\mathbf{x}^{'}(s)}\parallel=\frac{1}{2}\parallel(\mathbf{x}_{j+1}-\mathbf{x}_j)\parallel.
\end{equation*}
We calculate the contribution over a line segment $\Delta \Gamma_j$:
\[
\int\limits_{\Delta\Gamma_j} f(\mathbf{x}) d\Gamma = \int_{-1}^1 f(\mathbf{x}(s))\parallel\mathbf{x^{'}(s)}\parallel ds = \frac{1}{2}\parallel\mathbf{x}_{j+1} - \mathbf{x}_j\parallel \int_{-1}^1 f(\mathbf{x}(s)) ds.
\]
Let ${\boldsymbol{H}(\mathbf x}(s))$ be the Hessian matrix of $f\mathbf{(x}(s))$, then Taylor’s Theorem (see Theorem 1.6.8 in \citep{Vuik2023}) warrants the existence of a $\hat{s}(s)\in (\min(0,s),\max(0,s))$ such that
\begin{align*}
\int\limits_{\Delta\Gamma_j}f(\mathbf{x}) d\Gamma & ~=~ \frac{1}{2}\parallel\mathbf{x}_{j+1} - \mathbf{x}_j\parallel \int_{-1}^1 f(\mathbf{x}(0)) + \frac{s}{2}(\mathbf{x}_{j+1} - \mathbf{x}_j)\nabla f(\mathbf{x}(0)) \\
&\quad + \frac{1}{2}s^2 \left(\frac{\mathbf{x}_{j+1} - \mathbf{x}_j}{2} \right)^T {\boldsymbol{H}(\mathbf{x}}(\hat{s}(s)))(\frac{\mathbf{x}_{j+1} - \mathbf{x}_j}{2})ds \\
& =\frac{1}{2}\parallel\mathbf{x}_{j+1} | - \mathbf{x}_j\parallel \left[2 ~ f(\mathbf{x}_{j+1/2})+0 + \frac{1}{8} \int_{-1}^{1} s^2 (\mathbf{x}_{j+1} - \mathbf{x}_j)^T{\boldsymbol{H}(\mathbf{x}}(\hat{s}(s)))(\mathbf{x}_{j+1} - \mathbf{x}_j)ds\right]\\
&=\parallel\mathbf{x}_{j+1} - \mathbf{x}_j\parallel f(\mathbf{x}_{j+1/2})+ \frac{1}{16}\parallel\mathbf{x}_{j+1} - \mathbf{x}_j  \parallel \int_{-1}^1 s^2(\mathbf{x}_{j+1} - \mathbf{x}_j)^T{\boldsymbol{H}(\mathbf{x}}(\hat{s}(s)))(\mathbf{x}_{j+1} - \mathbf{x}_j)ds.\\
\end{align*}
Since $f$ is smooth in an open region around $\Gamma_p$, it follows that there exists a $\hat{K}_j>0$ (maximum eigenvalue), such that
\begin{equation*}
\big|{(\mathbf x,{\boldsymbol H}(\mathbf x))}\big|\leq \hat{K}_j\parallel\mathbf{x}\parallel^2,\\ \text{on } \Delta \Gamma_j.
\end{equation*}
Therefore, we obtain that
\begin{align*}
     \bigg|\int\limits_{\Delta\Gamma_j}f(\mathbf{x}) d\Gamma-\parallel\mathbf{x}_{j+1} - \mathbf{x}_j\parallel f(\mathbf{x}_{j+1/2})\bigg|\leq \frac{1}{24} \hat{K}_j \parallel\mathbf{x}_{j+1} - \mathbf{x}_j\parallel^3.
\end{align*}

Let $\hat{K} = \max_j (\hat{K}_j)$, then summation of the boundary elements over ${\Gamma}_p$ gives
\begin{align*}
&\bigg|\int\limits_{\Gamma_p}{f(\mathbf{x})} d\Gamma- \sum_{j=1}^{m} \parallel\mathbf{x}_{j+1} - \mathbf{x}_j\parallel f(\mathbf{x}_{j+\frac12})\bigg| \le 
   \sum_{j=1}^{m}\bigg|\int\limits_{\Delta\Gamma_j}{f(\mathbf{x})} d\Gamma-\parallel\mathbf{x}_{j+1} - \mathbf{x}_j\parallel f(\mathbf{x}_{j+\frac12})\bigg|  \\
  \leq&\frac{1}{24} \hat{K} \sum_{j=1}^{m}\parallel\mathbf{x}_{j+1} - \mathbf{x}_j\parallel^3
   \leq \frac{1}{24}\hat{K}\ \text{max}_{j\in{\{1,...,m}\}} (\parallel\mathbf{x}_{j+1} - \mathbf{x}_j\parallel^2)\cdot \big|\Gamma_p\big| 
   =\frac{1}{24}\hat{K}h^2\big|\Gamma_p\big|,
\end{align*}
where $h=\text{max}_{j\in{\{1,...,m}\}}\parallel\mathbf{x}_{j+1} - \mathbf{x}_j\parallel$, and $|{\Gamma }_p|$ is the perimeter of the polygon ${\Gamma }_p$. 
\\[2ex]
Subsequently, we treat the three dimensional case. In three dimensions, the surface element over a manifold is a patch of a surface (a portion of a plane, i.e a triangle). Suppose $e_j$ is a triangular surface element in three dimensional space with vertices $({\bf{x}}_1, {\bf{x}}_2, {\bf{x}}_3)$. We compute the midpoint ${\bf{x}}_c$ of $e_j$ as ${\bf{x}}_c= \frac{1}{3} ({\bf{x}}_1 + {\bf{x}}_2 + {\bf{x}}_3)$.  We map the triangle in $(x, y,z)-$space to the reference triangle in $(s, t)$-space with points $(s_1,t_1)=(0, 0), ~(s_2,t_2)=(1, 0)$ and $(s_3,t_3)=(0, 1)$. The parameterization from the reference triangle $e_0$ to the original (physical) triangle $e_j$ is given by
\begin{equation*}
    {\bf{x}}(s, t) = {\bf{x}}_1(1 - s - t) + s{\bf{x}}_2 + t {\bf{x}}_3, \quad 0 \leq s \leq 1, \quad 0 \leq t \leq 1-s.
\end{equation*}
For any function $f({\mathbf{x}}) \in {\bf{C}^2(\Omega)}$, the integral of $f(\mathbf{x})$ over the original triangle is given by
\begin{equation*}
    {\int_{e_j}} f({\bf{x}})d{{\bf{x}}} = {\int_{e_0}} f({\bf{x}}(s, t)) |A_j| d(s, t),
\end{equation*}
where $|A_j|=\sqrt{| \det({\bf{J}}^T {\bf{J}})|}$, where $\bf{J}$ is the Jacobian matrix, and $e_0 = \{(s, t) \in \mathbb{R}^2 : 0 \leq s \leq 1, 0 \leq t \leq 1 - s\}$ given by
\begin{equation*}
   {\bf{J}} = \frac{\partial(x, y,z)}{\partial(s, t)} = \begin{pmatrix} x_2 - x_1 & x_3 - x_1 \\ y_2 - y_1 & y_3 - y_1 \\ z_2 - z_1 & z_3 - z_1 \end{pmatrix},
\end{equation*}
and $\sqrt{| \det({\bf{J}}^T {\bf{J}})|}$ is twice the area of the original triangle $e_j$, i.e.,
\begin{equation*}
    |A_j| := \sqrt{| \det({\bf{J}}^T {\bf{J}})|} = \|({\bf{x}}_2 - {\bf{x}}_1) \times ({\bf{x}}_3 - {\bf{x}}_1)\|.
\end{equation*}
Let ${\bf{x}}(1/3, 1/3)$ coincide with the midpoint ${\bf{x}}_c$ of element $e_j$, using the midpoint rule (Taylor's Theorem) for a multivariate function gives

\begin{align*}
    \int_{e_j} f(\mathbf{x})d{\mathbf {x}} &~  
    = \int_{e_j} \left(f({\bf{x}}_c) + ({\bf{x}} - {\bf{x}}_c) \cdot \nabla {f}({\bf{x}}_c) + \frac{1}{2} ({\bf{x}} - {\bf{x}}_c)^T {\boldsymbol{H}}({\bf{x}}_c + \theta({\bf x}) ({\bf{x}}_c - {\bf{x}})) ({\bf{x}} - \mathbf {x}_c) \right) d{\mathbf {x}} \\
 &=  \frac{1}{2} |A_j| f({\bf{x}}_c) + 0 + \frac{1}{2} \int_{e_j}({\bf{x}} - {\bf{x}}_c)^T {\boldsymbol{H}}({\bf{x}}_c + \theta({\bf x}) ({\bf{x}}_c - {\bf{x}})) ({\bf{x}} - \mathbf {x}_c) d{\mathbf {x}}, ~ \exists ~ \theta({\bf x}) \in (0,1). 
\end{align*}
Since $f(\mathbf{x}) \in {\bf C}^2(\Omega)$, it follows that there exists a $\hat{K}_j>0$ (maximum eigenvalue, different from the 2D-case), such that
\begin{equation*}
\big|{({\mathbf x},{\boldsymbol H}({\mathbf x}))}\big|\leq \hat{K}_j\parallel\mathbf{x}\parallel^2, \text{on } e_j.
\end{equation*}
Therefore, we obtain that
\[
\left| \int_{e_j} f{\bf{(x)}} \, d{\bf{x}} - \frac{ |A_j|}{2} f{\bf{(x}}_c) \right| \leq \left| \frac{ 1}{2} \int_{e_j} ({\bf{x}} - {\bf{x}}_c)^T {\boldsymbol{H}}({\bf{x}}_c + \theta({\bf x}) ({\bf{x}}_c - {\bf{x}})) ({\bf{x}} - \mathbf {x}_c) \,d{\mathbf {x}} \right| \leq \frac{ |A_j|}{4}\hat{K}_j{h^2},
\]
where $h$ is the maximal diameter in the original triangle $e_j$. \\
Let $\hat{K} = \max_j (\hat{K}_j)$, considering all the surface elements over $\Gamma_p$, we get
\[
\left| \int_{\Gamma_p} f{\bf{(x}}) \, d{\bf{x}} - \sum_{j=1}^{m} \frac{|A_j|}{2} f{\bf{(x}}_j) \right| \leq \frac{\hat{K} {h^2}}{4} \sum_{j=1}^{m} \frac{ |A_j|}{2} = \frac{\hat{K} {h^2}}{4} |\Gamma_p|,
\]
where $h$ is the maximal diameter among all the surface elements (i.e., triangles) and $|\Gamma_p|$ is the sum of the areas (area in $\mathbb{R}^3$) of all the surface elements over $\Gamma_p$. Therefore, in three dimensions, we can conclude that there exists a constant $C_2 > 0$ such that:
\[
\left| \int_{\Gamma_p} f{\bf{(x}}) \, d{\bf{x}} - \sum_{j=1}^{m} \frac{|A_j|}{2} f{\bf{(x}}_j) \right| \leq {C_2}{h^2}.
\] 
\end{proof}
In principle, we demonstrated the convergence of the midpoint rule for quadrature in a more generic setting. We note that this result for a generic curve or manifold was also demonstrated in other works \citep{Peng_2022}, among others, although it was not labeled as a formal result. Different quadrature strategies may be used, such as Newton-Cotes or Gaussian-based quadratures, which have their respective orders of convergence. For a generic convergence result for the computation of $\int_{\Gamma_p} f(\mathbf{x}) d \mathbf{x}$ with 'sufficient' smoothness, one obtains an error that is bounded by $C_2 h^r$ for a certain $C_2 > 0,~ r > 0$, where $r$ represents the order of convergence. Assuming such an error bound, we arrive at the following important results. 
\begin{proposition}
\label{corr1}
    Let $\boldsymbol{u}^{\ast }$ and $\boldsymbol{u}_{h}^{\ast }$, respectively, be given by equation (\ref{eq:u_star}) and (\ref{eq:u_h_star}), and suppose that the numerical quadrature method over the closed surface or contour over $\Gamma_p$ is bounded from above by $C h^r$ for a $C > 0$, $r > 0$ then for all $\boldsymbol{x} \in \Omega_\delta=\{\boldsymbol{x} \in \Omega \ {|} \ {\rm dist} (\boldsymbol{x},\Gamma) >\delta\}$, $\delta > 0$ (that is away from $\Gamma$), there is a $C_2 > 0$ and a $\Bar{C} > 0$ such that for $\boldsymbol{x} \in \Omega_\delta$, 
$|{\boldsymbol u}_h^* - {\boldsymbol u}^*| \le C_2 h^{r}$, and   $\parallel \boldsymbol{u}_{h}^*-\boldsymbol{u}^*\parallel_{{\bf L}^2(\Omega_{\delta})} \leq \Bar{C}h^{r}$. Note that here $\delta$ represents a positive constant but does not represent the Dirac delta distribution.
\end{proposition}
\begin{proof}
    Since ${\bf G}(\boldsymbol{x};\boldsymbol{x}')$ is smooth for any $\boldsymbol{x} \in \Omega_\delta$ (away from $\Gamma$), and hence the requirements of Lemma \ref{lemma:midpoint} are fulfilled, we have $|{\boldsymbol u}_h^* - {\boldsymbol u}^*| \le C_2 h^{r}$ for any $\boldsymbol{x} \in \Omega_\delta$. Integration over $\Omega_\delta$ gives
\begin{equation*}
\begin{split}  
 \left[\int_{\Omega_\delta}| \boldsymbol{u}_{h}^*-\boldsymbol{u}^*|^{2} d{\Omega}\right]^{1/2}\leq \left[\int_{\Omega_\delta}C_2^{2}({\boldsymbol{x}})h^{2r} d{\Omega}\right]^{1/2} \le \sup_{\boldsymbol{x} \in \Omega_\delta} C_2({\boldsymbol{x}})h^{r} |{\Omega_\delta}|^{1/2}= \Bar{C}h^{r}.
\end{split}
\end{equation*}
Hence,
\begin{equation}
\parallel \boldsymbol{u}_{h}^*-\boldsymbol{u}^*\parallel _{{\bf L}^{2}(\Omega_\delta
)}\leq \Bar{C}h^{r}.
\label{one}
\end{equation}
\end{proof}

\begin{proposition}
\label{corr2}
    Let $\boldsymbol{u}^{\ast }$ and $\boldsymbol{u}_{h}^{\ast }$, respectively, be given by equation (\ref{eq:u_star}) and (\ref{eq:u_h_star}), and suppose that the numerical quadrature method over the closed surface or contour over $\Gamma_p$ is bounded from above by $C h^r$ for a $C > 0$, $r > 0$. Further let $S$ and $\Tilde{\Gamma}$ be a bounded surface and curve, then there are $C_S > 0$ and $C_{\tilde{\Gamma}}>0$, such that $\parallel \boldsymbol{u}_{h}^*-\boldsymbol{u}^*\parallel_{{\bf L}^2(S)} \leq {C_S}h^{r}$ and $\parallel \boldsymbol{u}_{h}^*-\boldsymbol{u}^*\parallel_{{\bf L}^2(\tilde{\Gamma})} \leq {C_{\tilde{\Gamma}}}h^{r}$, respectively.
\end{proposition} 
 \begin{proof}
     Since ${\bf G}(\boldsymbol{x};\boldsymbol{x}')$ is smooth for any $\boldsymbol{x} \in S$, and hence the requirements of Lemma \ref{lemma:midpoint} are fulfilled, we have $|{\boldsymbol u}_h^* - {\boldsymbol u}^*| \le C_2 h^{r}$ for any $\boldsymbol{x} \in S$. Integration over $S$ gives

\begin{equation*}
\begin{split}  
 \left[\int_{S}| \boldsymbol{u}_{h}^*-\boldsymbol{u}^*|^{2} d{S}\right]^{1/2}\leq \left[\int_{S}C_2^{2}({\boldsymbol{x}})h^{2r} d{S}\right]^{1/2}\le \sup_{\boldsymbol{x} \in S}C_2({\boldsymbol{x}})h^{r} |{S}|^{1/2}=C_S h^{r}.
\end{split}
\end{equation*}
Hence,
\begin{equation}
\parallel \boldsymbol{u}_{h}^*-\boldsymbol{u}^*\parallel _{{\bf L}^{2}(S
)}\leq {C_S}h^{r}.
\label{two}
\end{equation}
and analogously,
\begin{equation}
\parallel \boldsymbol{u}_{h}^*-\boldsymbol{u}^*\parallel _{{\bf L}^{2}(\Tilde{\Gamma}
)}\leq {C_{\tilde{\Gamma}}}h^{r}.
\label{three}
\end{equation}
\end{proof}

The above lemmas and propositions lead to one of the key results in this manuscript, regarding the convergence between the auxiliary solutions $\boldsymbol{v}$ and $\boldsymbol{v}_h$:
\begin{theorem}
\label{Th_v_v_h}
    Let $\boldsymbol{v}$ and $\boldsymbol{v}_h$ solve $\mathrm{(BVP_{\boldsymbol v})}$ and $\mathrm{(BVP_{{\boldsymbol v}_h})}$, respectively, then there is a $K > 0$ such that $|| \boldsymbol{v} - \boldsymbol{v}_h ||_{{\bf L}^2(\Omega_\delta)} \le K h^{r}$.
\end{theorem}
\begin{proof}
From Lemma \ref{lemma: uniqueness of v_p}, we have existence of $\boldsymbol{v},~\boldsymbol{v}_h \in {\bf H}^1(\Omega)$, as well as of their difference. With Proposition \ref{corr2}, one obtains
\begin{align*}
||{\boldsymbol u}_h^* - {\boldsymbol u}^* ||^2_{{\bf L}^2(\partial \Omega)}\le C_3h^{r}, \quad \exists ~ C_3 > 0.
\end{align*}
Note that the right-hand side is a constant, hence in ${\bf H}^{1/2}(\partial \Omega) \subset {\bf L}^2(\partial \Omega)$ (recall from Equation (\ref{eq:u_splitting}) that $\{\boldsymbol{v},~\boldsymbol{v}_h\}=-\{\boldsymbol{u}^*,~\boldsymbol{u}_h^*\}$ on $\partial \Omega$), therefore
since
\begin{align*}
||{\boldsymbol v}_h - {\boldsymbol v} ||_{{\bf L}^2(\partial\Omega)}= 
|| {\boldsymbol u}_h^* - {\boldsymbol u}^* ||_{{\bf L}^2(\partial\Omega)} \le C_3h^{r}.
\end{align*}
The Trace Extension Theorem \citep{adams2003sobolev}
asserts the existence of a $\beta > 0$ such that
\begin{align*}
|| {\boldsymbol v}_h - {\boldsymbol v} ||_{{\bf H}^1(\Omega)} \le \beta
|| {\boldsymbol v}_h - {\boldsymbol v} ||_{{\bf L}^2(\partial \Omega)} = \beta
|| {\boldsymbol u}_h^* - {\boldsymbol u}^* ||_{{\bf L}^2(\partial \Omega)} \le \beta C_3h^{r}.
\end{align*}
Since
\begin{align*}
|| {\boldsymbol v}_h - {\boldsymbol v} ||_{{\bf L}^2(\Omega)} \le || {\boldsymbol v}_h - {\boldsymbol v} ||_{{\bf H}^1(\Omega)} \le C_4 h^{r}, \qquad C_4 = \beta ~ C_3,
\end{align*}
it follows that
there is a $K>0$ such that
\begin{align}
|| {\boldsymbol v}_h - {\boldsymbol v} ||_{{\bf L}^2(\Omega_\delta)} \le || {\boldsymbol v}_h - {\boldsymbol v} ||_{{\bf L}^2(\Omega)} \le K h^{r},
\label{two}
\end{align}
 \end{proof}

Subsequently, after having proved the convergence between $\boldsymbol{v}$ and $\boldsymbol{v}_h$, as well as between $\boldsymbol{u}^*$ and $\boldsymbol{u}_h^*$, the convergence between the solutions to $\mathrm{(BVP_{int})}$ and $\mathrm{(BVP_{sum})}$ (namely, $\boldsymbol{u}$ and $\boldsymbol{u}_h$) is warranted by the theorem below. 
\begin{theorem}
\label{Th_u_u_h}
    Let $\boldsymbol{u}$ and $\boldsymbol{u}_h$ be the solutions to $\mathrm{(BVP_{int})}$ and $\mathrm{(BVP_{sum})}$, respectively. Then for all $\boldsymbol{x} \in \Omega_\delta=\{\boldsymbol{x} \in \Omega \ | \ dist (\boldsymbol{x},\Gamma) >\delta\}$, $\delta > 0$ (that is away from $\Gamma$), there is a $C_0 > 0$ such that
$\parallel {\boldsymbol u}_h - {\boldsymbol u}\parallel_{{\bf L}^2(\Omega_{\delta})} \le C_0 h^{r}$.
\end{theorem}
\begin{proof}
    Since $||.||_{{\bf L}^2(\Omega_\delta)}$ is a proper norm,  we apply the Triangular Inequality and combine the results from Proposition \ref{corr1} and Relation (\ref{two}), in order to obtain
\begin{align*}
    \parallel {\boldsymbol{u}}_{h}-{\boldsymbol{u}} \parallel_{{\bf L}^2(\Omega_\delta)}
 &= \parallel \boldsymbol{u}_{h}^{*} +\boldsymbol{v}_{h} -(\boldsymbol{u}^{*}
+\boldsymbol{v})\parallel_{{\bf L}^2(\Omega_\delta)}= \parallel \boldsymbol{u}_{h}^{*}-\boldsymbol{u}^{*}
+\boldsymbol{v}_{h}-\boldsymbol{v}\parallel_{{\bf L}^2(\Omega_\delta)}\\
 &\leq \parallel \boldsymbol{u}_{h}^{*}-\boldsymbol{u}^{*}\parallel_{{\bf L}^2(\Omega_\delta)}
+\parallel \boldsymbol{v}_{h}-\boldsymbol{v}\parallel_{{\bf L}^2(\Omega_\delta)}
\le (C+K)h^{r}=C_0 h^{r}.
\end{align*}
\end{proof}

Note that the above result is valid for two- and three-dimensional cases, and that this amounts to convergence being determined by the quadrature rule that is used over the contour or surface where the forces are applied. This result agrees with common intuition.

\section{Numerical Validation}
\label{sec:4}
In this section, we conduct the simulations in two and three dimensions to numerically verify the convergence between $\boldsymbol{u}^*$ and $\boldsymbol{u}_h^*$ and between $\boldsymbol{u}$ and $\boldsymbol{u}_h$ in a predefined subset in $\Omega_\delta$, which excludes the positions where point forces are applied. In particular, we evaluate solutions (i.e. $\boldsymbol{u}^*_h, \boldsymbol{u}_h$ and $\boldsymbol{v}_h$) over a curve (or a surface in three dimensions), which is denoted by $S\subset \Omega_\delta$ in line with Figure \ref{fig1}.

The parameter values are listed in Table \ref{Table:para_2D} for both two- and three-dimensional simulations. The computational domain is $\Omega = (-2,2)^d$ with $d=\{2, 3\}$, the cell and the closed curve $S$ is centered at the origin with radius $r$ and $R$, respectively. For a two-dimensional case, the FEM was implemented in Python 3.12 using the FEniCS package version 2019.2.0.dev0 \citep{FEniCS}.

\begin{table}[h!]\footnotesize
\centering
\caption{\it Parameter values for two- and three dimensional simulations in Section \ref{sec:4}.}
\label{Table:para_2D}
\begin{tabular}{lllll}
\hline
Parameter & Description & Value & Units & Source \\ \hline
$E$ & Young's Modulus & $1.0\times 10^{7}$ & $kg/(\mu m \cdot\text{ }min^{2})$ & \citet{Peng_Cancer_2023} \\ 
$Q$ & Magnitude of the force exerted on the cell & $1.0\times 10^{3}$ & $%
kg \cdot\mu m/min^{2}$ &  \citet{Peng_Cancer_2023} \\ 
$R$ & Radius of the boundary outside the cell & $0.5$ & $\mu m$ & Estimated in this study \\ 
$r$ & Radius of the cell & $0.3$ & $\mu m$ & \citet{Chen_2017} \\ 
$\nu $ & Poisson's Ratio & $0.25$ & $-$ &  \citet{Peng_Cancer_2023}\\ \hline
\hline
\end{tabular}
\end{table}

\subsection{Two Dimensional Simulations}
We model a regular polygonal curve $\Gamma$ centered at the origin in $\mathbb{R}^2$ that exerts radial pulling forces at the midpoints of its edges. As the number of polygon edges (line segments) approaches infinity, the polygon tends to a circle. 
Figure \ref{fig:2-8} displays the resulting displacement $\boldsymbol{u}_h$ from $\mathrm{(BVP_{sum})}$ field within the domain $[-2,2] \times [-2,2]$, when the cell is pulling the immediate environment. Figure \ref{fig:2-8}(a) shows the magnitude of the displacement and Figure \ref{fig:2-8}(b) zooms in the certain region and shows the vector field of $\boldsymbol{u}_h$. 

\begin{figure}[h!]
\centering
\subfigure[Displacement magnitude]{
\includegraphics[width=0.48\textwidth]{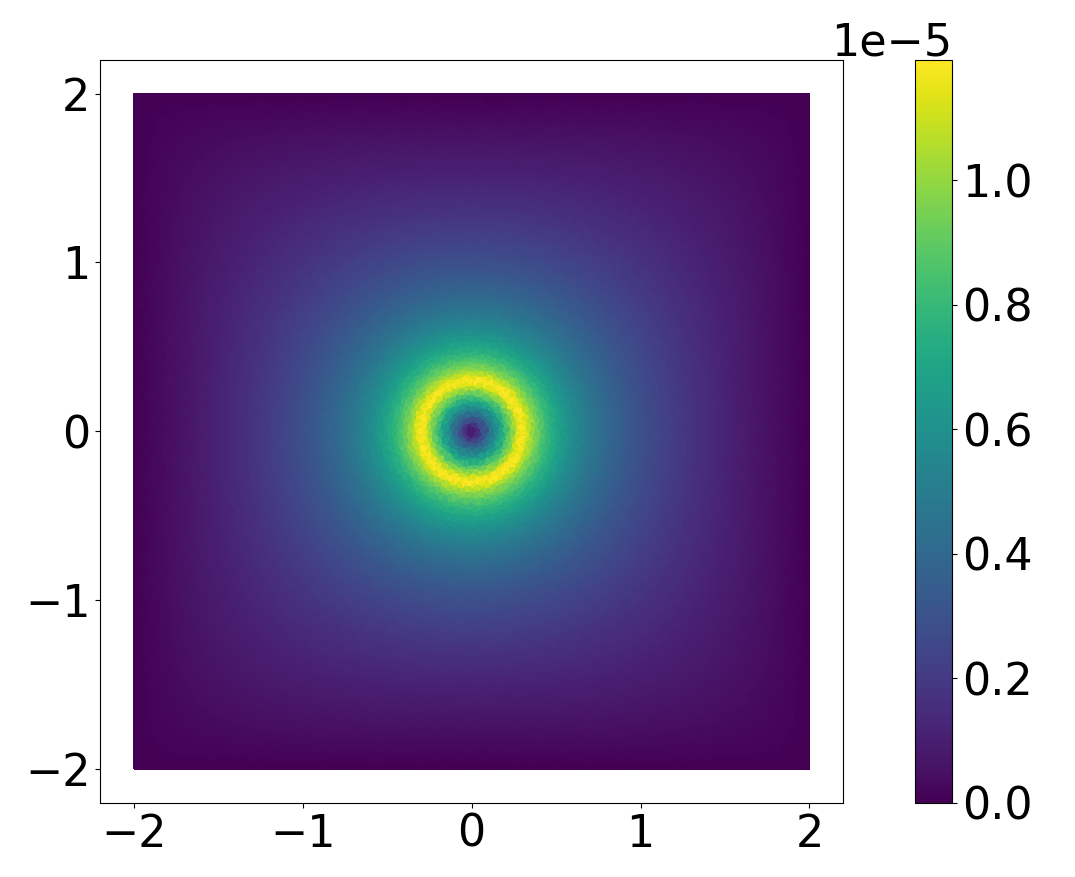}}
\subfigure[Displacement field]{\includegraphics[width=0.48\textwidth]{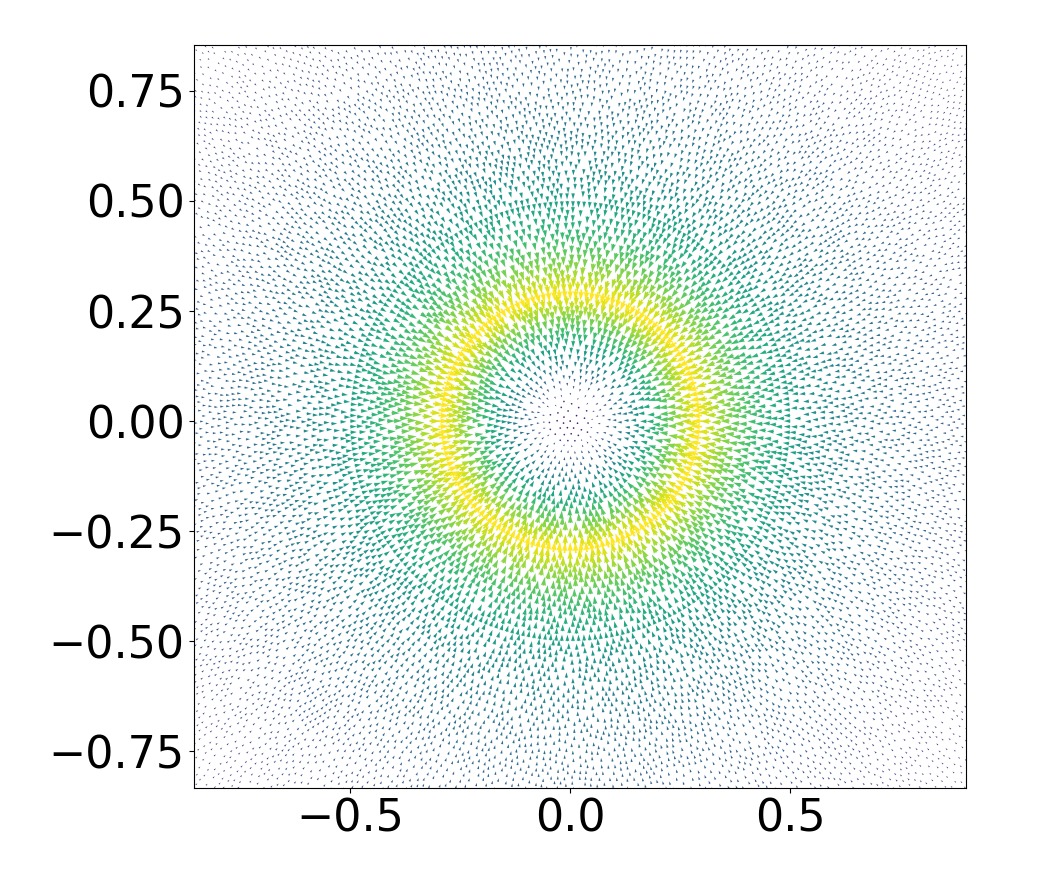}}
    \caption{\it Magnitude of the displacements and displacement field around a circular cell of radius $r=0.3\, \mu m$, where the point forces are applied, are shown in the sub-figurrs (a) and (b), respectively. For the displacement field (sub-figure (b)), it is zoomed in. The input parameters can be found in Table \ref{Table:para_2D}.}
    \label{fig:2-8}
\end{figure}
We evaluate the convergence of the solution ${\boldsymbol u}_h^*$ with respect to the number of mesh points, $m$, on the cell boundary. Convergence is quantified by the ${\bf L}^2$--norm over the circle centered at the origin and radius $R = 0.5$. To approximate the ${\bf L}^2$--norm over this circle $\partial B(\boldsymbol{0},R)$, the circle is divided into $N$ segments so that a polygon results. The length of each line segment is $\Delta \Gamma$, and the midpoint of the line segment of the polygon is denoted by $\boldsymbol{x}_j$. Then the ${\bf L}^2$--norm over $\partial B(\boldsymbol{0}, R)$ can be numerically computed by 
\begin{equation}
|| {\boldsymbol u}_h^* ||_{{\bf L}^2(\partial B(0,R))}^2 = \int_{\partial B(0,R)} ||{\boldsymbol u}_h^*||^2 d \Gamma  
\approx \sum_{j = 1}^N || {\boldsymbol u}_h^*(\boldsymbol{x}_j)||^2 \Delta \Gamma.
\end{equation}
The finite element discretization has been chosen such that $h \sim \Delta \Gamma$.
The numerical error of this composite midpoint rule is given by $\mathcal{O}(\Delta \Gamma^2)$.
Richardson's estimate of the order of convergence \cite{van2005numerical} is used to estimate the order of convergence. 

Table \ref{tab:boundary_edge_results}-\ref{tab:2D_convergence_fixed_FEM_mesh} present the order of convergence of ${\bf L}^2$--norm of $\boldsymbol{u}_h$ (solution to $\mathrm{(BVP_{sum})}$ with singularity removal technique), $\boldsymbol{u}_h^{\rm FEM}$ (solution to $\mathrm{(BVP_{sum})}$ using classical finite-element method), $\boldsymbol{u}^*_h$ (fundamental solution expressed in Equation \eqref{eq:u_h_star}) and $\boldsymbol{v}_h$ (solution to $\mathrm{(BVP_{{\boldsymbol v}_h})})$. In Table \ref{tab:boundary_edge_results}, the cell boundary is divided into line segments by FEM mesh (i.e., the endpoints of line segments over the cell boundary coincide with the nodal points of FEM mesh), that is, the number of edges of cell boundary doubles due to the refinement of FEM mesh. Hence, the numerical errors result from both FEM and the quadrature rule in Equation \eqref{eq:linear_elas_u_h_star}. Meanwhile, Table \ref{tab:2D_convergence_fixed_FEM_mesh} utilizes a high-resolution FEM mesh with average mesh size of $0.01912$. Therefore, the results in this table only reflect the error from the quadrature rule over the interface (cell boundary) $\Gamma$.

For both aforementioned simulation settings, the order of convergence of the ${\bf L}^2$--norm is about two, which indicates excellent agreement with Proposition \ref{corr2} and Theorems \ref{Th_v_v_h} - \ref{Th_u_u_h}. Note that the convergence of $\boldsymbol{u}_h^{\mathrm{FEM}}$ is less satisfying compared to $\boldsymbol{u}_h$ solved by the singularity removal technique. This convergence characteristic is in line with the findings by \cite{K_ppl_2014} of convergence of finite element solutions for Laplace equations with Dirac measures, where it was found that the ${\bf L}^2$--error away from the singularity behaves like $\mathcal{O}(h^2 \ln(h))$ for linear Lagrangian elements.

\begin{table}[h!]\footnotesize
\centering
\caption{\it Weighted ${\bf L}^2$--norm and the corresponding order of convergence for circular domain in the two-dimensional case for the closed curve $S$, as indicated in Figure \ref{fig1}. The number of edges on the cell boundary is a consequence of FEM mesh refinement.}
\label{tab:boundary_edge_results}
\begin{tabular}{m{1.5cm}m{1cm}|m{2.5cm}m{1.5cm}||m{2.5cm}m{1.5cm}}
\hline
\textbf{Number of edges on cell boundary} & \textbf{FEM mesh size} & \textbf{$\left\Vert \mathbf{u_h}%
\right\Vert _{{\bf L}^{2}(S)}$} & \textbf{Order of convergence} & \textbf{$\left\Vert \mathbf{u}%
_h^{\mathrm{FEM}}\right\Vert _{{\bf L}^{2}(S)}$} & \textbf{Order of convergence} \\
\hline
9   & $0.2416$ &$1.0259604 \times 10^{-5}$ & --             & $1.0359142 \times 10^{-5}$ & --             \\
18  & $0.1208$ & $1.0603018 \times 10^{-5}$ & --             & $1.0631187 \times 10^{-5}$ & --             \\
36  & $0.0604$ &$1.0694758 \times 10^{-5}$ & 1.904331227    & $1.0700364 \times 10^{-5}$ & 1.975480139    \\
72  & $0.0302$   & $1.0718094 \times 10^{-5}$ & 1.9749689      & $1.0719390 \times 10^{-5}$ & 1.862294754    \\
\hline
\hline
\end{tabular}

\begin{tabular}{m{1.5cm}m{1cm}|m{2.5cm}m{1.5cm}||m{2.5cm}m{1.5cm}}
\textbf{Number of edges on cell boundary} & \textbf{FEM mesh size} & \textbf{$\left\Vert \mathbf{u_h}{^*}\right\Vert _{{\bf L}^{2}(S)}$} & \textbf{Order of convergence} & \textbf{$\left\Vert \mathbf{v_h}%
\right\Vert _{{\bf L}^{2}(S)}$} & \textbf{Order of convergence} \\
\hline
9  & $0.2416$ & $1.0801511 \times 10^{-5}$ & --              & $5.4240216 \times 10^{-7}$ & --              \\
18 & $0.1208$ & $1.1148931 \times 10^{-5}$ & --              & $5.4614627 \times 10^{-7}$ & --              \\
36 & $0.0604$ & $1.1241513 \times 10^{-5}$ & 1.907866181     & $5.4708112 \times 10^{-7}$ & 2.001824342     \\
72 & $0.0302$ & $1.1265049 \times 10^{-5}$ & 1.975908187     & $5.4731631 \times 10^{-7}$ & 1.990877149     \\
\hline
\hline
\end{tabular}
\end{table}

\begin{table}[h!]
\footnotesize
    \centering
    \caption{\it Weighted ${\bf L}^2$--norm and the corresponding order of convergence for circular domain in the two-dimensional case for the closed curve $S$, as indicated in Figure \ref{fig1}. Note that the FEM mesh is fixed regardless of the increasing number of edges on the cell boundary. Here, the FEM mesh size is $0.01912$.}
    \begin{tabular}{{m{1.5cm}|m{2.5cm}m{1.5cm}||m{2.5cm}m{1.5cm}}}
    \hline
    \textbf{Number of edges on cell boundary} & \textbf{$\left\Vert \mathbf{u_h}%
    \right\Vert _{{\bf L}^{2}(S)}$} & \textbf{Order of convergence} & \textbf{$\left\Vert \mathbf{u}%
    _h^{\mathrm{FEM}}\right\Vert _{{\bf L}^{2}(S)}$} & \textbf{Order of convergence} \\
    \hline
    10 &$1.1606485 \times 10^{-5}$ & --             & $1.1605940 \times 10^{-5}$ & --             \\
    20 & $1.2375921 \times 10^{-5}$ & --             & $1.2375388 \times 10^{-5}$ & --             \\
    40 &$1.2549946 \times 10^{-5}$ & 2.1444506522    & $1.2549403\times 10^{-5}$ & 2.144607035    \\
    80 & $1.2591117 \times 10^{-5}$ & 2.079605617      & $1.2590573 \times 10^{-5}$ & 2.079559888    \\
    \hline
    \hline
    \end{tabular}

    \begin{tabular}{m{1.5cm}|m{2.5cm}m{1.5cm}||m{2.5cm}m{1.5cm}}
    \textbf{Number of edges on cell boundary}  & \textbf{$\left\Vert \mathbf{u_h}{^*}\right\Vert _{{\bf L}^{2}(S)}$} & \textbf{Order of convergence} & \textbf{$\left\Vert \mathbf{v_h}%
    \right\Vert _{{\bf L}^{2}(S)}$} & \textbf{Order of convergence} \\
    \hline
    10  & $1.2241133 \times 10^{-5}$ & --              & $6.3486279 \times 10^{-7}$ & --              \\
    20 & $1.3052789 \times 10^{-5}$ & --              & $6.7702553 \times 10^{-7}$ & --              \\
    40 & $1.3236332 \times 10^{-5}$ & 2.144751170     & $6.8654559 \times 10^{-7}$ & 2.146926574     \\
    80 & $1.3279755 \times 10^{-5}$ & 2.079605661     & $6.8879784 \times 10^{-7}$ & 2.079604572     \\
    \hline
    \hline
    \end{tabular}
    
    \label{tab:2D_convergence_fixed_FEM_mesh}
\end{table}

\subsection{Three Dimensional Simulations}
In the three dimensional set-up, we consider a regular surface $\Gamma$ composed of triangular faces and radially pulling forces are exerted from the barycenter of the faces inwards. As the number of faces increases, the surface $\Gamma$ approaches the sphere $\partial B({\bf 0},r)$. As in the two dimensional case, we evaluate the convergence of the fundamental solutions ${\boldsymbol u}_h^{\ast}$, Eq. \eqref{eq:u_h_star}, with respect to the number of faces on the boundary of the cell. Convergence is quantified by the ${\bf L}^2$--norm over the parallelograms as sketched in Figure \ref{fig2}, where we consider two cases: 
\begin{itemize}
    \item[-] 
a plane not intersecting the cell $\Gamma$, that is, $S \cap \Gamma = \emptyset$;
\item[-] a plane intersecting the cell $\Gamma$, that is, $S \cap \Gamma \ne \emptyset$. 
\end{itemize}
We consider successive refinements of $\Gamma$, whereas the mesh for the quadrature on $S$ is not further refined. For each refinement, each triangular face in $\Gamma$ is divided into four similar triangles.
Furthermore, the region $S$ is divided into 2500 uniform sub-parallelograms, and the Midpoint Rule for numerical integration is used.

Using the input data from Table \ref{Table:para_2D}, and taking the region enclosed by $\Gamma$ centered around the origin, we consider the situations in Figure \ref{fig2}. The results for the ${\bf L}^2$--norm have been listed in Table \ref{Table:results_L2_norm_3D_C1}, and in both cases a quadratic $\mathcal{O}(h^2)$ convergence is observed. The observed convergence rate agrees excellently with the predicted theoretical rate. It is a key result of this study which strongly supports the validity of our proposed methodology.
Note that $h$ represents a measure for the largest diameter of the triangular elements on $\Gamma$.
In the first case (Figure \ref{fig2}(a)), $S$ represents a parallelogram that does not intersect the spherical surface $\Gamma$, and the result is entirely in line with our findings. In the second case (Figure \ref{fig2}(b)), $S$ represents a parallelogram that intersects the sphere $\Gamma$. The observed quadratic convergence is attributed to the fact that the fundamental solution is in ${\bf L}^2(\mathbb{R}^3) \setminus {\bf H}^1(\mathbb{R}^3)$, and to the fact that the intersection, being a curve, has zero measure on a surface. Furthermore, the integration points on $S$ and $\Gamma$ may not coincide. This convergence exceeds our expectations, and has triggered our interest for further research.

Currently, we omitted the assessment of the complete error in the three dimensional setting. The reason is that in order to assess this error, it is required that all other errors are negligible. That is, the finite element errors, as well as the quadrature error over $S$ should be negligible, which requires either ultra-fine meshes or higher-order discretization methods. In order to figure this out, we either have to rely on more computational power or on higher order finite elements and quadratures over $S$.

\begin{figure}[h!]
\centering
\subfigure[No intersection between cell and plane]{
\includegraphics[width=0.48\textwidth]{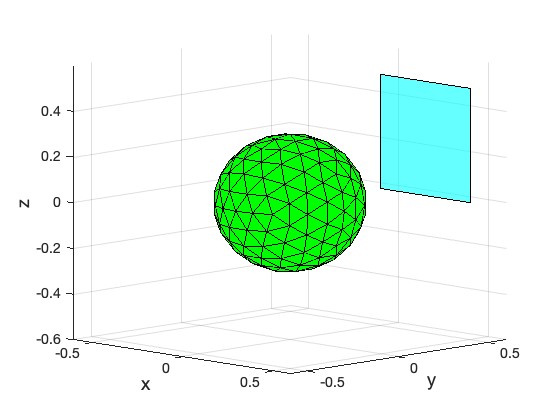}}
\subfigure[Intersection between cell and plane]{ \includegraphics[width=0.48\textwidth]{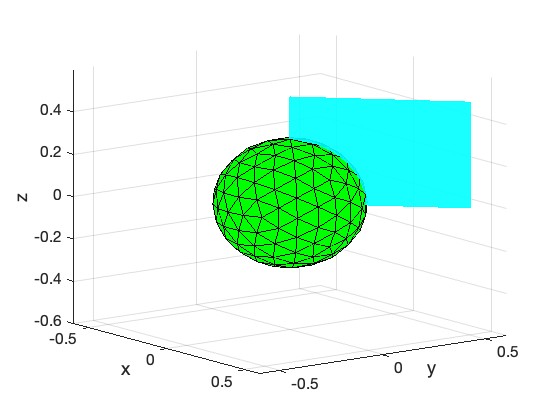}}
\caption{\it Geometry in $\mathbb{R}^3$ with the spherical cell $B({\bf 0},0.3)$ and two regions $S$. (a): $S$ is the parallelogram defined by the points $A(0.5,0.5,0)$, $B(0,0.5,0)$ and $C(0.5,0.5,0.5)$ ($S \cap \Gamma = \emptyset$). (b): $S$ is the parallelogram defined by the points $A(0.5,0.5,0)$, $B(0,0,0)$ and $C(0.5,0.5,0.5)$, which intersects the sphere ($S \cap \Gamma \ne \emptyset$). The dimensions of both configurations are the same.}
\label{fig2}
\end{figure}

\begin{table}\footnotesize
\centering
\caption{\it Weighted ${\bf L}^2$--norm of ${\boldsymbol u}_h^*$ for two parallelograms in the three-dimensional case, namely, a case not intersecting the cell and a case intersecting the cell; see Figure \ref{fig2} as a schematic. The order of convergence is also presented for each case accordingly.}
\label{Table:results_L2_norm_3D_C1}
\begin{tabular}{m{1.5cm}|m{2.5cm}m{1.5cm}||m{2.5cm}m{1.5cm}}
\hline
\textbf{Number of faces on cell boundary} & \textbf{$S \cap \Gamma = \emptyset$} & {\bf Order of convergence } & \textbf{$S \cap \Gamma \ne \emptyset$} & \textbf{Order of convergence} \\ 
\hline
80    & $1.941412\times 10^{-4}$ & - &  $2.890295\times 10^{-4}$  & - \\ 
320   & $2.147246\times 10^{-4}$ & - & $3.195174\times 10^{-4}$ & - \\ 
1280  & $2.203262\times 10^{-4}$ & $1.877579$ &  $3.278651 \times 10^{-4}$ & $1.868786$ \\ 
5120  & $2.217570\times 10^{-4}$ & $1.969053$ &  $3.299971\times 10^{-4}$ & $1.969203$ \\ 
20480 & $2.221166\times 10^{-4}$ & $1.992241$ & $3.305329\times 10^{-4}$ & $1.992274$ \\ 
81920 & $2.222066\times 10^{-4}$ & $1.998059$ & $3.306671\times 10^{-4}$ & $1.998067$\\ 
\hline
\hline
\end{tabular}
\end{table} 

\section{Discussion and Conclusions}
\label{sec:5}
We used the Immersed Interface Method for forces exerted on a curve or surface within two- and three-dimensional media. This method assigns a force density along the curve or surface, incorporating it into the momentum balance through an integral with the Dirac delta distributions. However, this integral cannot always be simplified to an explicit expression. Hence, for the purpose of solving the corresponding boundary value problem with numerical solvers, a quadrature rule is used to approximate the integral over the surface or curve where the forces are applied. This approximation introduces an additional numerical error besides the one from numerical solvers (e.g. finite-element methods or finite-volume methods).

Our investigation on quantifying how the quadrature rule influences the solution started with the linear elasticity equation in two and three dimensions. To better handle the Dirac delta distribution in numerical simulations the singularity removal technique is used, introduced by Gjerde et al. \cite{Gjerde_2019}. We consider an open, non-empty, bounded and connected domain $\Omega$ in which the solution to the linear elasticity equation is composed of the fundamental solution in $\Omega(\mathbb{R}^{d})$ with $d=\{2,3\}$ and an auxiliary term. Due to the singularity in the exact solution that is caused by the Dirac delta distribution, the solution does not lie in the classical Hilbert space ${\bf H}^1$. Hence, following \cite{K_ppl_2014,Bertoluzza_2017}, we are more interested in the convergence rate of the finite element solution in the subdomain of $\Omega$ that \textit{excludes} the part where the Dirac delta distributions are applied.

We demonstrated that the ${\bf L}^2$--error in the solution of the linear elasticity problem in $\Omega$ is of the same order as the error of the quadrature rule that is applied over the curve or surface where the forces are applied. As a computational validation, we utilized the Midpoint Rule to approximate the integral form of the force. We observed that the numerical results agree with our theoretical findings. Two-and three-dimensional cases have been considered.

The ultimate goal of the current study is to show that the quadrature error for the evaluation of the integral over the interface determines the difference between the internal solutions. The results of the model can also be used to subsequently determine the force that is exerted on the outer boundary by computing $\boldsymbol\sigma \cdot \boldsymbol n$ at $\partial \Omega$. A practical application could be a tumor that is growing and pushing on its direct environment. The stresses in the tissue will lead to stresses on the outer boundary (so on different surrounding parts of the body). Furthermore, future work will extend the present study to more complicated modeling frameworks, including viscoelasticity and morpho-poro-viscoelasticity \citep{Sabia2025, Sabiafem2025}, which are more realistic to accurately model biological scenarios for soft tissues. These extensions significantly increase the analytical and computational complexity of the problem. In particular, when fundamental solutions are unavailable, classical analytical techniques such as boundary integral formulations cannot be employed. Moreover, if the governing differential operator is nonlinear, the principle of superposition no longer applies, making it difficult to obtain closed-form solutions and to establish stability and uniqueness. Consequently, future studies will require advanced mathematical analysis and robust numerical methods to address these challenges.

\vskip 2em \noindent
{\bf Declaration of competing interest:}
The authors declare that they have no known competing financial interests or personal relationships that could
have appeared to influence the work reported in this paper.\\[1.5ex]
\noindent{\bf Acknowledgments:} The work of QP was supported by Research England under the Expanding Excellence in England (E3) funding stream, which was awarded to MARS: Mathematics for AI in Real-world Systems in the School of Mathematical Sciences at Lancaster University. The work of EJ is supported in part by the Spanish project PID2022-140108NB-I00 (MCIU/AEI/FEDER, UE), and by the DGA (Grupo de referencia APEDIF, ref. E24$\textunderscore$17R). Further, this work was supported by a fellowship awarded to SA by the Higher Education Commission (HEC) of Pakistan in the framework of project: 1(2)/HRD/OSS-III/BATCH-3/2022/HEC/527.

\begin{appendices}

\section{}\label{Appendix}
Here we present the proof of Lemma \ref{lemma: uniqueness of v_p}. For ease of reading, we reformulate it:
\begin{lemma star}
    The variational problem $\mathrm{(WF_{{\boldsymbol v}_p})}$ has one and only one solution ${\boldsymbol v}_p \in {\bf H}^1(\Omega)$.
    \label{existence}
\end{lemma star}
\begin{proof}
The form $a(.,.)$ is obviously bilinear and symmetric. Furthermore, Korn's Inequality \citep{braess2001finite} assists in proving that $a(.,.)$ is coercive (or positive definite), and the boundedness of the bilinear form follows from symmetry through the Toeplitz-Hellinger Theorem \cite{Edward}. First, we show that there is at most one solution to the variational problem. Suppose that there are two solutions in ${\bf H}^1(\Omega)$ to $\mathrm{(WF_{{\boldsymbol v}_p})}$, then the difference between them, referred to as $\Delta {\boldsymbol v} \in {\bf H}^1_0(\Omega)$, satisfies
\begin{align*}
    a(\Delta {\boldsymbol v}, \phi) = 0, 
    \qquad \forall \phi \in {\bf H}^1_0(\Omega).
\end{align*}
Using $\phi = \Delta {\boldsymbol v}$ and coercivity, it follows that there is $\alpha > 0$ such that 
\begin{align}
\alpha || \Delta \boldsymbol{v} ||^2_{{\bf H}^1(\Omega)} \le a(\Delta {\boldsymbol v}, \Delta {\boldsymbol v}) = 0.
\end{align}
This implies that the difference $\Delta {\boldsymbol v} = 0$, and hence there is at most one solution (note that one can also directly use Lax-Milgram's theorem and substitution of zero).

We establish the existence of ${\boldsymbol v},~{\boldsymbol v}_h \in {\bf H}^1(\Omega)$ in the variational problem \citep{Fichera1973,Lions2005}. From now on, we follow more or less the procedure outlined in Lions and Magenes \cite{lions2012non}. Since ${\boldsymbol u}^*,~{\boldsymbol u}_h^*$ are smooth on $\partial \Omega$, and assuming that $\partial \Omega$ is Lipschitz, it follows from the Trace Extension Theorem \citep{adams2003sobolev} that for both cases (${\boldsymbol v}$ and ${\boldsymbol v}_h$) there is a ${\boldsymbol v}^P = E({\boldsymbol u}^*|_{\partial \Omega}),~
{\boldsymbol v}_h^P = E({\boldsymbol u}_h^*|_{\partial \Omega})\in {{\bf H}^1(\Omega)}$ (in other words there are ${\bf H}^1$--extensions from $\partial \Omega$ to $\Omega$) such that $T({\boldsymbol v}^P) = {\boldsymbol u}^*|_{\partial \Omega}$ and $T({\boldsymbol v}_h^P) = {\boldsymbol u}_h^*|_{\partial \Omega}$ since ${\boldsymbol u}^*|_{\partial \Omega},~{\boldsymbol u}_h^*|_{\partial \Omega} \in {{\bf H}^{1/2}(\partial \Omega)}$, for which $$||{\boldsymbol v}^P||_{{{\bf H}^1(\Omega)}} \le C_I ||{\boldsymbol u}^*||_{{{\bf L}^2(\partial \Omega)}}, \qquad ||{\boldsymbol v}_h^P||_{{{\bf H}^1(\Omega)}} \le C_I^h ||{\boldsymbol u}_h^*||_{{{\bf L}^2(\partial \Omega)}},$$ for a $C_I, C_I^h = C_I,C_I^h(\Omega) > 0$. To this extent, we write ${\boldsymbol v} = {\boldsymbol v}^H + {\boldsymbol v}^P$ and ${\boldsymbol v}_h = {\boldsymbol v}_h^H + {\boldsymbol v}_h^P$, where ${\boldsymbol v}^P |_{\partial \Omega} = -{\boldsymbol u}^*(\boldsymbol{x})$ and ${\boldsymbol v}_h^P |_{\partial \Omega} = -{\boldsymbol u}_h^*(\boldsymbol{x})$. This, respectively, gives
\begin{align*}
\text{Find ${\boldsymbol v}^H \in {{\bf H}^1_0(\Omega)}$ such that }
a({\boldsymbol v}^H,\phi) = -a({\boldsymbol v}^P,\phi), \text{ for all $\phi \in {{\bf H}_0^1(\Omega)}$, \text{ and }}
\end{align*}
\begin{align*}
\text{Find ${\boldsymbol v}_h^H \in {{\bf H}^1_0(\Omega)}$ such that }
a({\boldsymbol v}_h^H,\phi) = -a({\boldsymbol v}_h^P,\phi), \text{ for all $\phi \in {{\bf H}_0^1(\Omega)}$.}
\end{align*}
By substitution and some algebraic manipulations, it follows that the bilinear form is bounded in ${\bf H}^1_0(\Omega)$. The right-hand side is then also bounded in $\phi \in {{\bf H}^1_0(\Omega)}$. Together with coerciveness, it follows from Lax-Milgram's Lemma that there exists ${\boldsymbol v}^H, {\boldsymbol v}_h^H \in {\bf H}_0^1(\Omega)$ that solve the above variational problems. Since we observed that ${\boldsymbol v}^P$ and ${\boldsymbol v}_h^P$ exist in ${\bf H}^1(\Omega)$, it follows that ${\boldsymbol v} = {\boldsymbol v}^P + {\boldsymbol v}^H$ and ${\boldsymbol v}_h = {\boldsymbol v}_h^P + {\boldsymbol v}_h^H$ exist in ${\bf H}^1(\Omega)$. We also saw earlier that there is at most one respective ${\boldsymbol v}, ~{\boldsymbol v}_h \in{\bf H}^1(\Omega)$. This concludes the proof.
\end{proof}

\end{appendices}

\bibliography{sn-bibliography}

\end{document}